\newcommand{\n}{\hspace*{-6pt}}
\newcommand{\E}{\mathbf{E}}
\newcommand{\N}{\mathscr{N}}
\DeclareMathOperator{\diag}{diag}
\subjclass{91C20, 91D25, 91D30, 94C15}
\keywords{Social network, Bonabeau model, egalitarian society, competing model}
\title{Bonabeau model on fully occupied site graphs}
\author{Hsin-Lun Li}
\date{}
\email{hsinlunl@asu.edu}
\theoremstyle{definition}
\newtheorem{theorem}{Theorem}
\newtheorem{lemma}[theorem]{Lemma}
\begin{document}

\allowdisplaybreaks

\thispagestyle{firstpage}
\maketitle
\begin{center}
    Hsin-Lun Li
    \centerline{$^1$National Sun Yat-sen University, Kaohsiung 804, Taiwan}
\end{center}
\medskip

\begin{abstract}
    The Bonabeau model is a competing model where agents fight to maintain or change their positions. Originally studied on a finite lattice, in this model, one agent is randomly selected to move to a neighboring site chosen at random. If the neighboring site is vacant, the agent moves there. However, if the site is occupied, a fight ensues. If the agent wins, they switch places with the other agent; otherwise, they remain in their original position. We investigate the Bonabeau model on fully occupied site graphs and derive a critical bound for the stability of the egalitarian state applicable to all fully occupied connected site graphs. Furthermore, we develop a competing model where all fights end in finite time on all site graphs.
\end{abstract}

\section{Introduction}
The Bonabeau model consists of a finite set of agents occupying the sites of a two-dimensional square lattice with a linear dimension \(L\). Each agent occupies a single site, and the density of agents on the lattice is denoted by \(\rho\). At each time step, an agent is selected uniformly at random and chooses a neighboring site uniformly at random. If the neighboring site is unoccupied, the agent moves to that site. However, if the neighboring site is occupied, a fight occurs. Let agents \(i\) and \(j\) be the attacker and the defender, respectively. If agent \(i\) defeats agent \(j\), they switch sites. Otherwise, they remain unchanged. The probability \(Q_{ij}(t)\) that agent \(i\) beats agent \(j\) at time \(t\) is defined by a Fermi function:
$$Q_{ij}(t) = \frac{1}{1 + \exp[-\eta(h_i(t) - h_j(t))]},$$
where \(\eta > 0\) and \(h_i(t) \in \mathbb{R}\) indicates the power of individual \(i\) at time \(t\). The larger \(h_i(t)\) is, the more powerful individual \(i\) is. The initial power distribution of all individuals is assumed to be an egalitarian state where everyone has power 0. $\eta>0$ indicates that a more powerful individual is likelier to win a fight, whereas a less powerful individual is likelier to lose a fight. $Q_{ij} + Q_{ji} = 1$ implies that a fight ends with either a win or loss outcome.

There are two modes in the Bonabeau model: competition and relaxation~\cite{lacasa2006bonabeau}. Competition occurs when there is a fight. The winner gains 1 in power, while the loser loses \(F \geq 1\) in power. Relaxation occurs at all times, decreasing each individual's power by a factor of \(1 - \mu\), where \(\mu \in (0, 1)\). Interpreting mathematically, letting \( [n] = \{1, \ldots, n\} \) denote the collection of all individuals, for all \( i \in [n], \)
\[
h_i(t+1)=\begin{cases}
    (1-\mu) h_i(t) & \text{if agent } i \text{ does not involve in a fight at time } t, \\
    (1-\mu) (h_i(t) + 1) & \text{if agent } i \text{ wins the fight at time } t, \\
    (1-\mu) (h_i(t) - F) & \text{if agent } i \text{ loses the fight at time } t.
\end{cases}
\]

Here, we investigate the Bonabeau model on fully occupied site graphs, where fights occur continuously. Let \( G = (V, E) \) denote a simple undirected site graph, with vertex set \( V \) and edge set \( E \). Each vertex represents a site, and each edge indicates the connection between two sites. Denote
$$ \N_i(t) = \{j \in [n] : i \neq j \ \text{and}\ \text{agents}\ i\ \text{and}\ j\ \text{are adjacent in}\ G\} $$
as the collection of all site neighbors of individual \( i \) at time \( t \). At each time step, a pair of adjacent individuals is selected, and they have an equal chance of being the attacker or defender. In the Bonabeau model, the interactions between agents are fights, resulting in a win-or-loss outcome. In contrast, in the Deffuant model~\cite{lanchier2020probability}, the Hegselmann-Krause model~\cite{lanchier2022consensus}, the mixed Hegselmann-Krause model~\cite{mHK,mHK2}, and the imitation model~\cite{li2024imitation}, the interactions between agents lead them to move closer to each other in opinions.

An \emph{egalitarian society} is achieved if all individuals' power approaches the same value as time progresses. It is clear that all individuals have the same power if and only if the standard deviation $\sigma$ of their power $(h_i)_{i \in [n]}$ is 0. A \emph{Laplacian} $\mathscr{L}$ of a simple undirected graph $G = (V, E)$ is defined as
\[
\mathscr{L} = \diag(d_1, \ldots, d_{|V|}) - A,
\]
where
\[
\begin{array}{rcl}
\displaystyle d_i & \n=\n & \displaystyle\text{degree of vertex } v_i \text{ in } G,\vspace{2pt} \\
\displaystyle A_{ij} &\n =\n &\displaystyle \mathbbm{1}\{(v_i, v_j) \in E\}.
\end{array}
\]
We can derive another Laplacian of \( G \) by reordering the vertices, and the eigenvalues remain the same. The \emph{order} of a graph \( G \), \( |G| \), is the number of vertices in \( G \).

There is always a fight in the Bonabeau model on fully occupied site graphs. We develop a competing model without endless fights, even on fully occupied site graphs. Naturally, the weakest individuals cannot trigger a fight, and other stronger individuals do not want to attack them since they gain nothing from the fight. Mathematically, this means there is an absorbing state $-\ell$ such that an individual is out of fights if their power is in that state. Unlike the Bonabeau model, the competing model does not undergo relaxation processes. Furthermore, the system is assumed to be conservative in power, namely \( F = 1 \). Also, the constant parameter $\eta>0$ is replaced with a function of time, $\eta_t>0$. In essence, the mechanism of the competing model aligns with the original assumptions of the Bonabeau model, but without relaxation, and with \( F = 1 \), an absorbing state \( -\ell \) as a negative integer, and $\eta$ replaced with $\eta_t$. For all \( i \in [n], \)
\begin{equation}\label{competing model}
h_i(t+1) =
\begin{cases}
    h_i(t) & \text{if agent } i \text{ does not involve in a fight at time } t, \\
    h_i(t) + 1 & \text{if agent } i \text{ wins the fight at time } t, \\
    h_i(t) - 1 & \text{if agent } i \text{ loses the fight at time } t.
\end{cases}
\end{equation}

\section{Main results}
For the Bonabeau model on fully occupied connected site graphs, we derive a critical upper bound for $(1+F)\eta$ to achieve the stable egalitarian state $0 \in \mathbb{R}^n$ as $n \to \infty$.

\begin{theorem}\label{Thm:critical upper bound for stability}
   $4\mu/(1-\mu)$ is the critical upper bound for $(1+F)\eta$ to obtain the stable egalitarian state $0 \in \mathbb{R}^n$ as $n \to \infty$ for all fully occupied connected site graphs, i.e.,
\begin{itemize}
    \item $0 \in \mathbb{R}^n$ is stable as $n \to \infty$ for all fully occupied connected site graphs if $(1+F)\eta < 4\mu/(1-\mu)$, and
    \item there is some fully occupied connected site graph with an unstable egalitarian state $0 \in \mathbb{R}^n$ as $n \to \infty$ when $(1+F)\eta > 4\mu/(1-\mu)$.
\end{itemize}
\end{theorem}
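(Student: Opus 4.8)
\emph{Linearisation and a spectral criterion.} Near a configuration with all powers equal every $Q_{ij}=1/2$, and the Fermi function expands as $Q_{ij}(t)=\tfrac12+\tfrac{\eta}{4}\bigl(h_i(t)-h_j(t)\bigr)+O\bigl(|h_i(t)-h_j(t)|^{3}\bigr)$, where $1/4$ is the derivative of $s\mapsto(1+e^{-\eta s})^{-1}$ at $0$. Writing $h(t+1)=(1-\mu)\bigl(h(t)+w(t)\bigr)$ with $w(t)$ carrying the $+1$ and $-F$ increments of the single fight at time $t$, a short computation of the conditional expectation over the uniformly chosen pair yields
\[
\E\bigl[w(t)\mid h(t)\bigr]=\frac1{|E|}\Bigl(\frac{1-F}{2}\,\mathbf d+\frac{(1+F)\eta}{4}\,\mathscr L\, h(t)\Bigr)+O\bigl(\|h(t)\|^{3}\bigr),
\]
with $\mathbf d$ the degree vector and the Laplacian appearing through $\sum_{j\in\N_i}(h_i-h_j)=(\mathscr L h)_i$. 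Thus the expected one-step map is, to first order, the affine map $h\mapsto(1-\mu)\bigl(I+\tfrac{(1+F)\eta}{4|E|}\mathscr L\bigr)h+(\text{const})$, whose fixed point is the egalitarian state (equal to $0$ when $F=1$), so stability of that state is governed by the linear part $L:=(1-\mu)\bigl(I+\tfrac{(1+F)\eta}{4|E|}\mathscr L\bigr)$. Since $\mathscr L$ is symmetric with eigenvalues $0=\lambda_1\le\dots\le\lambda_n$, one has $\rho(L)=(1-\mu)\bigl(1+\tfrac{(1+F)\eta\lambda_n}{4|E|}\bigr)$, so the per-graph stability condition $\rho(L)<1$ is equivalent to
\[
(1+F)\eta<\frac{4|E|}{\lambda_n}\cdot\frac{\mu}{1-\mu}.
\]

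\emph{Extremising over connected graphs.} Here I would invoke two classical facts: for a graph on $n$ vertices every Laplacian eigenvalue is at most $n$, so $\lambda_n\le n$; and a connected graph has $|E|\ge n-1$. Hence $|E|/\lambda_n\ge(n-1)/n$ for every connected graph, with equality for the star $K_{1,n-1}$, whose Laplacian spectrum is $0$, then $1$ with multiplicity $n-2$, then $n$. If $(1+F)\eta<4\mu/(1-\mu)$, choose $N$ with $(1+F)\eta<\tfrac{4(N-1)}{N}\cdot\tfrac{\mu}{1-\mu}$; since $x\mapsto(x-1)/x$ increases, the displayed stability condition then holds for every connected graph on $n\ge N$ vertices, which is the claimed stability as $n\to\infty$. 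If instead $(1+F)\eta>4\mu/(1-\mu)$, then on the star of any order the condition reads $(1+F)\eta<\tfrac{4(n-1)}{n}\cdot\tfrac{\mu}{1-\mu}<\tfrac{4\mu}{1-\mu}$, which fails, so the egalitarian state is unstable on the stars of every order. This pins the critical value at exactly $4\mu/(1-\mu)$.

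\emph{From linear (in)stability to the process, and the main difficulty.} In the stable regime I would upgrade the above to a drift bound for the deviation vector $g(t)=h(t)-\bar h(t)\mathbf 1$ and its standard deviation $\sigma(t)=\|g(t)\|/\sqrt n$: combining the expansion above with $\langle g,\mathscr L g\rangle\le\lambda_n\|g\|^{2}$ gives $\E[\sigma(t+1)\mid\mathcal F_t]\le\rho(L)\,\sigma(t)+O(n^{-1/2})$, the error term reflecting that a single fight moves only two coordinates by $O(1)$; iterating drives $\sigma$ down to $O(n^{-1/2})$, which is the stability as $n\to\infty$. In the unstable regime, perturbing the egalitarian state along the top Laplacian eigenvector of a star and running the same linearisation shows that $\E[h(t)]$, hence $\sigma(t)$, leaves every fixed neighbourhood of the egalitarian state exponentially fast. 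The real obstacle is not the extremal-graph step, which is short, but the uniform-in-$n$ bookkeeping around the linearisation: one has to control the cubic Fermi remainder, the $O(1)$-per-step fluctuation of a single fight, and the passage from $\|\E h(t)\|$ to $\E\|g(t)\|$, and in particular to show that the eigenvalue selected by the worst graph really is the one driving the deviation vector, so that the constant $1/4$ inherited from the Fermi derivative — hence the $4$ in $4\mu/(1-\mu)$ — is not altered.
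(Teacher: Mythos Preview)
Your linearisation, spectral criterion, and extremal-graph analysis match the paper's own argument almost exactly: the paper computes the Jacobian of the mean-field map as $(1-\mu)\bigl(I+\tfrac{(1+F)\eta}{4|E|}\mathscr L\bigr)$, applies the same bounds $\lambda_{\max}\le n$ and $|E|\ge n-1$ to get $\lambda_{\max}/|E|\le n/(n-1)$, and uses the star as the extremal family for the instability direction. Your third section, on upgrading linear (in)stability to the stochastic process, goes beyond what the paper attempts --- the paper works entirely at the level of the mean-field approximation and the Jacobian stability test --- so the difficulties you flag there are extra rigor rather than obstacles to the theorem as the paper states and proves it.
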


Next, we move on to the competing model~\eqref{competing model} and substantiate that all fights end in finite time on all site graphs.

\begin{theorem}\label{Thm:finite time convergence of a competing model}
    In the competing model~\eqref{competing model}, all fights end in finite time for any site graph~$G$.
\end{theorem}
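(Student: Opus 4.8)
The plan is to produce a bounded submartingale whose conditional drift is at least $2$ at every step at which a fight occurs, and then use a Borel--Cantelli argument to conclude that almost surely only finitely many fights take place. Two structural facts get recorded first. Because $F=1$, a fight moves exactly one unit of power from loser to winner, so $\sum_{i\in[n]}h_i(t)$ is conserved and equals $H:=\sum_i h_i(0)$. Because $-\ell$ is an absorbing floor, once $h_i(t)=-\ell$ the agent $i$ never fights again, and since every fight changes a power by exactly $1$ while $h_i(0)=0>-\ell$, we get $-\ell\le h_i(t)\le H+(n-1)\ell$ for all $i$ and $t$; hence the configuration $(h_1(t),\dots,h_n(t))$ lives in a finite set. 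Call a configuration \emph{terminal} if the set $\{i:h_i>-\ell\}$ of non-absorbed agents is independent in $G$; such a configuration is frozen, since then every edge of $G$ has an absorbed endpoint and no selected pair can fight. Thus it suffices to show that a terminal configuration is reached almost surely.

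Set $\Phi(t)=\sum_{i=1}^{n}\bigl(h_i(t)+\ell\bigr)^{2}$, which is bounded by the paragraph above. For a fight at time $t$ between adjacent non-absorbed agents $i,j$, the probability that $i$ wins is $Q_{ij}(t)$ regardless of the attacker/defender assignment (because $\tfrac12 Q_{ij}(t)+\tfrac12\bigl(1-Q_{ji}(t)\bigr)=Q_{ij}(t)$), and a short computation gives that the conditional expectation of $\Phi(t+1)-\Phi(t)$ given such a fight equals $2+2\bigl(h_i(t)-h_j(t)\bigr)\bigl(2Q_{ij}(t)-1\bigr)$, which is at least $2$ since $2Q_{ij}(t)-1=\tanh\!\bigl(\tfrac{\eta_t}{2}(h_i(t)-h_j(t))\bigr)$ shares the sign of $h_i(t)-h_j(t)$; note this lower bound holds for \emph{every} value $\eta_t>0$, which is exactly why the arbitrary time-dependence of $\eta$ is harmless. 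When the selected pair has an absorbed member, no fight occurs and $\Phi$ is unchanged. Writing $q_t$ for the ($\mathcal F_t$-measurable) probability that the pair selected at time $t$ is an edge with both endpoints non-absorbed, this yields $\E[\Phi(t+1)-\Phi(t)\mid\mathcal F_t]\ge 2q_t\ge 0$, so $\Phi(t)$ is a submartingale; telescoping and taking expectations gives $2\sum_{s\ge 0}\E[q_s]\le \sup\Phi-\Phi(0)<\infty$.

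By Tonelli, $\E\bigl[\sum_{s}q_s\bigr]<\infty$, hence $\sum_s q_s<\infty$ almost surely. On the event that a terminal configuration is never reached, the configuration is non-terminal at every time (terminal configurations are absorbing), so at least one fightable edge is always present and $q_s\ge 1/|E|$ for every $s$, forcing $\sum_s q_s=\infty$; therefore this event has probability $0$, i.e.\ a terminal configuration is reached almost surely and all fights end in finite time. The only delicate points I anticipate are fixing the model conventions (a selected pair containing an agent at $-\ell$ produces no fight; power is bounded above, so the state space is finite; $P(i\text{ wins})=Q_{ij}(t)$ independently of who attacks) — after which everything is routine. The one genuinely essential observation is the sign of $\tanh$, which absorbs the entire arbitrary sequence $(\eta_t)_{t\ge 0}$.
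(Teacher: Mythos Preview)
Your proof is correct and follows essentially the same route as the paper: the paper's submartingale $Z_t=\sum_{i,j}(h_i-h_j)^2$ is, by conservation of $\sum_i h_i$, an affine function of your $\Phi(t)=\sum_i(h_i+\ell)^2$, and the paper obtains the equivalent drift bound $\E[Z_{t+1}-Z_t]\ge 4n\,\mathbf{P}(A_t)$ via the same sign observation $(h_i-h_j)(2Q_{ij}-1)\ge 0$. The paper then finishes with the martingale convergence theorem instead of your more elementary telescoping-plus-Tonelli step, but this is a cosmetic difference.
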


Theorem~\ref{Thm:finite time convergence of a competing model} indicates that no agents in nonabsorbing states encounter each other after some finite time. Therefore, an egalitarian society is unachievable in the competing model~\eqref{competing model} on a connected site graph with $n \geq 2$.

\section{Bonabeau model on fully occupied site graphs}

\begin{lemma}\label{lemma:egalitarian state}
    All individuals' power approaches $(1-\mu)(1-F)/\mu n$ if an egalitarian society is achieved on a fully occupied site graph $G.$
\end{lemma}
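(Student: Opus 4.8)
\emph{Proof idea.} The plan is to track the total power $S(t) = \sum_{i \in [n]} h_i(t)$ and to exploit the fact that on a fully occupied site graph every time step produces exactly one fight. First I would observe that since $G$ is fully occupied, the selected pair of adjacent individuals both sit on $G$, so a fight necessarily occurs: one agent wins (gaining $1$ before relaxation), one loses (losing $F$ before relaxation), and then every agent's power is scaled by $1-\mu$. Summing the three cases of the update rule, the increments $+1$ and $-F$ of the two combatants are the only changes to the raw sum, so
\begin{equation*}
S(t+1) = (1-\mu)\bigl(S(t) + 1 - F\bigr),
\end{equation*}
a deterministic recursion that holds irrespective of which of the two individuals wins.

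Next I would solve this affine recursion. Its unique fixed point is $S^{\star} = (1-\mu)(1-F)/\mu$, and subtracting it yields $S(t+1) - S^{\star} = (1-\mu)\bigl(S(t) - S^{\star}\bigr)$; since $0 < 1-\mu < 1$, this gives $S(t) \to S^{\star}$ geometrically fast, regardless of the random sequence of fight outcomes and of the graph $G$.

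Finally I would combine this with the definition of an egalitarian society. If an egalitarian society is achieved, then $h_i(t) - h_j(t) \to 0$ for all $i,j \in [n]$, equivalently $h_i(t) - \bar{h}(t) \to 0$ for every $i$, where $\bar{h}(t) = S(t)/n$ is the average power. Since $\bar{h}(t) = S(t)/n \to S^{\star}/n = (1-\mu)(1-F)/(\mu n)$, it follows that $h_i(t) \to (1-\mu)(1-F)/(\mu n)$ for every $i \in [n]$, which is the claim.

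The argument is short; the only point requiring care is the bookkeeping in the first step — making sure the factor $1-\mu$ multiplies the \emph{already-updated} powers of the two combatants while no other agent's power changes except by that factor — and then confirming that ``all powers approach a common value'' together with ``$S(t)$ converges'' genuinely forces each $h_i(t)$ to converge to $\lim_t S(t)/n$, rather than merely staying close to a moving target. I do not anticipate a substantive obstacle beyond this.
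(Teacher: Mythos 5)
Your proposal is correct and follows essentially the same route as the paper: both reduce the claim to the affine recursion $S(t+1)=(1-\mu)(S(t)+1-F)$ for the total power, solve it to get the limit $(1-\mu)(1-F)/\mu$, and then invoke the egalitarian hypothesis to transfer the limit of the average to each $h_i$. The only (minor, favorable) difference is that you observe this recursion holds exactly and pathwise because every step on a fully occupied graph produces one fight with net raw increment $1-F$, whereas the paper derives the same identity by writing a mean-field expected update for each $h_i^\star$ and summing over $i$.
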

\begin{proof}
Letting $d_i=|\N_i(t)|$, $Q_{ij}=Q_{ij}(t)$, $h_i^\star=h_i(t+1)$ and $h_i=h_i(t).$ By mean field approximation,
\begin{align*}
     h_i^\star&=(1-\frac{d_i}{|E|})(1-\mu)h_i+\frac{1-\mu}{|E|}\sum_{j\in \N_i}\big[Q_{ij}(h_i+1)+Q_{ji}(h_i-F)\big]\\
    &=(1-\frac{d_i}{|E|})(1-\mu)h_i+\frac{1-\mu}{|E|}\sum_{j\in \N_i}\big[h_i+(1+F)Q_{ij}-F\big]\ \hbox{for all}\ i\in [n].
\end{align*}
Hence 
\begin{align*}
    \sum_{i\in [n]}h_i^\star&=\frac{1-\mu}{|E|}\sum_{i\in [n]}(|E|-d_i)h_i+\frac{1-\mu}{|E|}\bigg(\sum_{i\in [n]}d_ih_i+(1+F)|E|-F2|E|\bigg)\\
    &=\frac{1-\mu}{|E|}\bigg(|E|\sum_{i\in [n]}h_i-\sum_{i\in [n]}d_ih_i+\sum_{i\in [n]}d_ih_i+|E|(1-F)\bigg)\\
    &=(1-\mu)\bigg(\sum_{i\in [n]}h_i+1-F\bigg).
\end{align*}
Letting $h_t=\sum_{i\in [n]}h_i(t)/n$ and $a=(1-\mu)(1-F)/n,$ it turns out that
\begin{align*}
    h_{t+1}&=(1-\mu)h_t+a=(1-\mu)^{t+1}h_0+[(1-\mu)^t+\ldots+1]a\\
    &=(1-\mu)^{t+1}h_0+a\frac{1-(1-\mu)^{t+1}}{\mu}\to\frac{(1-\mu)(1-F)}{\mu n}\ \hbox{as}\ t\to\infty.
\end{align*}
So everyone's power approaches $(1-\mu)(1-F)/\mu n$ if an egalitarian society is achieved.
\end{proof}

Observe that all individuals' power approaches zero regardless of the initial power distribution as the population increases if an egalitarian society is achieved.

\begin{lemma}[\cite{das2003improved}]\label{lemma:largest eigenvalue of a Laplacian}
    The largest eigenvalue $\lambda_1$ of a Laplacian of the site graph $G$ satisfies
    \begin{equation*}
        \lambda_1\leq \max\{d_i+d_j-|N_i\cap N_j|: v_i,v_j\in V\ \hbox{and}\ (v_i,v_j)\in E\},
    \end{equation*}
    where $d_i$ is the degree of vertex $v_i$ in $G$ and $N_i=\{v_j\in V: (v_i,v_j)\in E\}$ is the collection of all neighboring sites of vertex $v_i.$
\end{lemma}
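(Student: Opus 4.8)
The statement is the theorem of \cite{das2003improved}; a self-contained argument runs as follows. First dispose of trivialities: the Laplacian spectrum of a disconnected graph is the union of the spectra of its components, and the right-hand side is a maximum over the (tacitly non-empty) edge set, so it suffices to bound $\lambda_1$ for a connected $G$ with at least one edge; in particular $\lambda_1>0$. Take a real eigenvector $x$ for $\lambda_1$ and, replacing $x$ by $-x$ if necessary, assume $x_p=\max_i|x_i|=\max_i x_i>0$ for some $p$. Since $G$ is connected with an edge, $v_p$ has a neighbour; choose $v_q\in N_p$ with $x_q=\min\{x_j:v_j\in N_p\}$. Then $x_p-x_q>0$, since otherwise every neighbour of $v_p$ equals $x_p$ and the eigenvalue identity at $v_p$ would force $\lambda_1=0$.

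The key move is to add two eigenvalue identities written in difference form. From $(\mathscr L x)_i=\sum_{v_j\in N_i}(x_i-x_j)=\lambda_1 x_i$, subtracting the identity at $v_q$ from the one at $v_p$ gives
\[
\lambda_1(x_p-x_q)=\sum_{v_j\in N_p}(x_p-x_j)+\sum_{v_k\in N_q}(x_k-x_q).
\]
Partition $N_p=\{v_q\}\cup(N_p\cap N_q)\cup P_p$ and $N_q=\{v_p\}\cup(N_p\cap N_q)\cup P_q$ into disjoint pieces, with $t=|N_p\cap N_q|$, $|P_p|=d_p-1-t$, $|P_q|=d_q-1-t$. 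The two ``link'' terms ($v_q\in N_p$ and $v_p\in N_q$) each equal $x_p-x_q$; the decisive observation is that each common neighbour $v_w$ contributes $(x_p-x_w)+(x_w-x_q)=x_p-x_q$ exactly, so the shared block contributes $t(x_p-x_q)$ rather than twice that; and every remaining difference is at most $x_p-x_q$, since $x_w\ge x_q$ for $v_w\in N_p$ (hence $x_p-x_w\le x_p-x_q$) and $x_w\le x_p$ for all $w$ (hence $x_w-x_q\le x_p-x_q$). Therefore the right-hand side is at most $\big(2+t+(d_p-1-t)+(d_q-1-t)\big)(x_p-x_q)=(d_p+d_q-t)(x_p-x_q)$, and dividing by $x_p-x_q>0$ gives $\lambda_1\le d_p+d_q-|N_p\cap N_q|\le\max\{d_i+d_j-|N_i\cap N_j|:(v_i,v_j)\in E\}$, since $(v_p,v_q)\in E$.

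The one genuinely non-routine step is the one just flagged — combining the $v_p$- and $v_q$-identities so that the common-neighbour contributions telescope from $2t$ to $t$ copies of $x_p-x_q$; this is precisely what upgrades the Anderson--Morley bound $\lambda_1\le\max\{d_i+d_j\}$ to the stated one. It also shows why one should not route the estimate through the signless Laplacian $\diag(d_1,\dots,d_{|V|})+A$ (whose largest eigenvalue, backed by a positive Perron vector, would make such bookkeeping smoother): that eigenvalue can already overshoot the target, being $4$ for a triangle while $d_i+d_j-|N_i\cap N_j|=3$ there. The full verification is carried out in \cite{das2003improved}.
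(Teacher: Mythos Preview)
The paper does not prove this lemma at all; it simply quotes the result from \cite{das2003improved} and moves on, using only the corollary $\lambda_1\le |V|$. Your proposal therefore goes well beyond what the paper supplies: you have reproduced the actual argument of \cite{das2003improved}, and it is correct. The crucial steps---choosing $v_q$ as the neighbour of the maximising vertex $v_p$ with smallest eigen\-vector entry, subtracting the two Laplacian eigenvalue identities, and observing that each common neighbour contributes a single telescoped copy of $x_p-x_q$ rather than two---are exactly Das's, and your bounds on the $P_p$ and $P_q$ pieces (using $x_w\ge x_q$ for $v_w\in N_p$ and $x_w\le x_p$ for all $w$) are sound. The side remark about why one cannot simply pass through the signless Laplacian is a nice sanity check but not needed for the argument.
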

Observe that $d_i+d_j-|N_i\cap N_j|=|N_i\cup N_j|$, which implies $\lambda_1\leq |V|.$

\begin{lemma}\label{lemma:eigenvalues of the Jacobian}
    On a fully occupied site graph $G=(V,E)$, let $J$ be the Jacobian matrix of $h$ during an egalitarian society,  $\mathscr{L}$ be a Laplacian of graph $G$, $$a=\frac{(1+F)\eta}{4|E|},\ \hbox{and let}\ I\in\mathbb{R}^{n\times n}\ \hbox{be the identity matrix}.$$ Then, $v$ is the eigenvector of $\mathscr{L}$ corresponding to eigenvalue $b$ $\iff$ $v$ is the eigenvector of $J$ corresponding to eigenvalue $(1-\mu)(1+ab)$.   
\end{lemma}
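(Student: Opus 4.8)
The plan is to compute the Jacobian $J$ of the mean-field update map explicitly at the egalitarian fixed point and recognize it, up to scaling and shift, as a Laplacian of $G$. Recall from the proof of Lemma~\ref{lemma:egalitarian state} that the update reads
\[
h_i^\star=\Bigl(1-\frac{d_i}{|E|}\Bigr)(1-\mu)h_i+\frac{1-\mu}{|E|}\sum_{j\in\N_i}\bigl[h_i+(1+F)Q_{ij}-F\bigr],
\]
where $Q_{ij}=1/(1+\exp[-\eta(h_i-h_j)])$. First I would differentiate $h_i^\star$ with respect to $h_k$. The only nonconstant pieces are the linear terms in $h_i$ and the $Q_{ij}$ terms; since $\partial Q_{ij}/\partial h_i=\eta\,Q_{ij}(1-Q_{ij})=-\partial Q_{ij}/\partial h_k$ for $k=j$, everything is elementary. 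Evaluating at the egalitarian state, all powers are equal, so $h_i-h_j=0$, giving $Q_{ij}=1/2$ and $Q_{ij}(1-Q_{ij})=1/4$; hence each surviving derivative of a $Q$-term contributes a factor $(1+F)\eta/4$.

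Carrying this out, the diagonal entry is
\[
J_{ii}=(1-\mu)\Bigl(1-\frac{d_i}{|E|}\Bigr)+\frac{1-\mu}{|E|}\Bigl(d_i+(1+F)\frac{\eta}{4}d_i\Bigr)
=(1-\mu)\Bigl(1+\frac{(1+F)\eta}{4|E|}\,d_i\Bigr)=(1-\mu)(1+a\,d_i),
\]
and for $k\neq i$ the off-diagonal entry is
\[
J_{ik}=\frac{1-\mu}{|E|}\sum_{j\in\N_i}(1+F)\frac{\eta}{4}\,\frac{\partial(h_i-h_j)}{\partial h_k}
=-(1-\mu)\,a\,\mathbbm{1}\{k\in\N_i\},
\]
the sign coming from $\partial(h_i-h_j)/\partial h_k=-1$ when $k=j$. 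Therefore $J=(1-\mu)\bigl(I+a(\diag(d_1,\ldots,d_n)-A)\bigr)=(1-\mu)(I+a\mathscr{L})$. The stated equivalence is then immediate: if $\mathscr{L}v=bv$ then $Jv=(1-\mu)(I+a\mathscr{L})v=(1-\mu)(1+ab)v$, and conversely, since $(1-\mu)\neq 0$ and $a\neq 0$, any eigenvector of $J$ with eigenvalue $(1-\mu)(1+ab)$ satisfies $(I+a\mathscr{L})v=(1+ab)v$, i.e. $\mathscr{L}v=bv$.

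I expect the main obstacle to be purely bookkeeping: tracking which terms in the sum $\sum_{j\in\N_i}$ actually depend on $h_k$, correctly getting the sign of $\partial Q_{ij}/\partial h_k$ for $k=j$ versus $k=i$, and confirming that the linear-in-$h_i$ terms recombine so that the $d_i/|E|$ pieces cancel exactly (as they did in Lemma~\ref{lemma:egalitarian state}), leaving the clean form $(1-\mu)(I+a\mathscr{L})$. One should also note that because the egalitarian state is a point where all coordinates are equal, the Jacobian is the same regardless of the common value, so "the Jacobian of $h$ during an egalitarian society" is well defined. No deeper idea is needed beyond this direct differentiation.
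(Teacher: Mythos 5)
Your computation is correct and follows essentially the same route as the paper: differentiate the mean-field update, use $\partial Q_{ij}/\partial h_i=\eta/4$ at the egalitarian state to get $J=(1-\mu)(I+a\mathscr{L})$, and read off the eigenvalue correspondence. The paper phrases the last step as a factorization of $J-\lambda I$, but the substance is identical.
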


\begin{proof}
Letting $d_i=|\N_i(t)|$, $h_i=h_i(t)$, $h_i^\star=h_i(t+1)$ and $Q_{ij}=Q_{ij}(t).$ During an egalitarian society, we derive
$\partial Q_{ij}/\partial h_i=\eta/4$ and $\partial Q_{ij}/\partial h_j=-\eta/4$, therefore
\begin{align*}
    &\frac{\partial h_i^\star}{\partial h_i}=(1-\frac{d_i}{|E|})(1-\mu)+\frac{1-\mu}{|E|}\bigg[1+(1+F)\frac{\eta}{4}\bigg]d_i=(1-\mu)\bigg[1+\frac{(1+F)\eta d_i}{4|E|}\bigg],\\
    &\frac{\partial h_i^\star}{\partial h_j}=-\frac{(1-\mu)(1+F)\eta}{4|E|}\ \hbox{for}\ j\in \N_i\ \hbox{and}\ 0\ \hbox{for}\ j\notin \N_i\cup \{i\}.
\end{align*}
Because of $J_{ij}=\partial h_i^\star/\partial h_j,$ $$J-\lambda I=(1-\mu)(a\mathscr{L}+I)-\lambda I=(1-\mu)a\big[\mathscr{L}+\frac{1}{a}(1-\frac{\lambda}{1-\mu})I\big].$$
Hence, $v$ is the eigenvector of $\mathscr{L}$ corresponding to eigenvalue $b=-[1-\lambda/(1-\mu)]/a$ $\iff$ $v$ is the eigenvector of $J$ corresponding to $\lambda=(1-\mu)(1+ab).$  
\end{proof}
Since a Laplacian is positive semidefinite, it follows from Lemmas~\ref{lemma:eigenvalues of the Jacobian} that the Jacobian \( J \) is positive definite, with the smallest eigenvalue and the largest eigenvalue \( (1-\mu) \) and \( (1-\mu)(1+\lambda_1 a) \). In particular, when the site graph \( G \) is complete, \(\lambda_1 = n\), implying that \( (1-\mu)(1+na) \) is the largest eigenvalue of the Jacobian \( J \). We derive the following lemma from the stability test on the Jacobian matrix $J$. Namely, check the dominant eigenvalue of $J.$

\begin{lemma}\label{lemma:stability of an egalitarian state}
    On a fully occupied site graph $G=(V,E)$, an egalitarian state is stable if $$(1-\mu)\bigg[1+\lambda_1 \frac{(1+F)\eta}{4|E|}\bigg]<1,\ \hbox{and unstable if}\ (1-\mu)\bigg[1+\lambda_1 \frac{(1+F)\eta}{4|E|}\bigg]>1.$$
\end{lemma}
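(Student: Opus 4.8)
The plan is to apply the standard linear stability test to the mean-field recursion $h\mapsto h^{\star}$ derived in the proof of Lemma~\ref{lemma:egalitarian state}. The egalitarian state is a fixed point of this recursion, and because the Fermi functions $Q_{ij}$ are smooth, the recursion is $C^{1}$ near that fixed point; hence its local stability is governed by the spectral radius $\rho(J)$ of the Jacobian $J$ computed in Lemma~\ref{lemma:eigenvalues of the Jacobian}: the egalitarian state is asymptotically stable when $\rho(J)<1$ and unstable when $\rho(J)>1$, the borderline case $\rho(J)=1$ being deliberately left out of the statement.

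So the first step is to identify $\rho(J)$. By Lemma~\ref{lemma:eigenvalues of the Jacobian}, $J$ and the Laplacian $\mathscr{L}$ are simultaneously diagonalizable, and the eigenvalues of $J$ are precisely the numbers $(1-\mu)(1+ab)$ as $b$ runs over the eigenvalues of $\mathscr{L}$, where $a=(1+F)\eta/(4|E|)>0$. Since a Laplacian is positive semidefinite, each such $b$ satisfies $0\le b\le\lambda_{1}$, so every eigenvalue of $J$ is real and lies in the interval $[\,1-\mu,\ (1-\mu)(1+a\lambda_{1})\,]$; in particular all eigenvalues of $J$ are strictly positive. Consequently $\rho(J)$ equals the largest eigenvalue, namely $(1-\mu)(1+a\lambda_{1})=(1-\mu)\bigl[1+\lambda_{1}(1+F)\eta/(4|E|)\bigr]$.

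The second step is then immediate: substituting this expression for $\rho(J)$ into the linear stability criterion above yields stability when $(1-\mu)\bigl[1+\lambda_{1}(1+F)\eta/(4|E|)\bigr]<1$ and instability when it exceeds $1$, which is exactly the assertion of the lemma.

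The only point requiring care — and the part I would expect to be the main obstacle, such as it is — is the justification of the linearization principle in this discrete-time, mean-field setting: one should confirm that $h\mapsto h^{\star}$ is continuously differentiable in a neighborhood of the egalitarian state and that, in the two regimes under consideration, no eigenvalue of $J$ lies on the unit circle, so that the Hartman--Grobman theorem (or the elementary contraction/expansion argument for a linear map with all eigenvalues off the unit circle) applies without the degenerate case intervening. Once that is in place, the eigenvalue bookkeeping inherited from Lemma~\ref{lemma:eigenvalues of the Jacobian} together with the positive semidefiniteness of $\mathscr{L}$ finishes the proof with no further computation.
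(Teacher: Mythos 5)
Your proposal is correct and follows essentially the same route as the paper, which likewise identifies the dominant eigenvalue of $J$ as $(1-\mu)\bigl[1+\lambda_1(1+F)\eta/(4|E|)\bigr]$ via Lemma~\ref{lemma:eigenvalues of the Jacobian} together with the positive semidefiniteness of the Laplacian, and then invokes the standard linearization stability test on the Jacobian. Your added care about the linearization principle (smoothness of $Q_{ij}$ and the exclusion of the borderline case) is a reasonable elaboration of what the paper leaves implicit.
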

Since all components of the site graph \( G \) are independent, we can assume that \( G \) is connected. A tree of order \( n \) is the minimal connected graph with \( n-1 \) edges. Lemma~\ref{lemma:largest eigenvalue of a Laplacian} indicates $\lambda_1\leq n.$ Hence, we derive the following lemma.

\begin{lemma}\label{lemma:upper bound}
    \[ \frac{\lambda_1}{|E|} \leq \frac{n}{n-1} \quad \text{on a fully occupied connected site graph } G. \]
\end{lemma}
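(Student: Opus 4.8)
\textbf{Proof proposal for Lemma~\ref{lemma:upper bound}.} The plan is to bound the numerator and denominator separately using facts already available in the excerpt. For the numerator, the observation immediately following Lemma~\ref{lemma:largest eigenvalue of a Laplacian} gives $\lambda_1 \leq |V| = n$, since $d_i + d_j - |N_i \cap N_j| = |N_i \cup N_j| \leq |V|$ for every edge $(v_i,v_j)$. For the denominator, I would invoke connectedness of $G$: a connected graph on $n$ vertices contains a spanning tree, and a tree of order $n$ has exactly $n-1$ edges, so any connected $G$ satisfies $|E| \geq n-1$ (with equality precisely when $G$ is a tree).

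Combining the two inequalities, $\lambda_1/|E| \leq n/(n-1)$, which is the claim. One should note the tacit assumption $n \geq 2$ so that $n-1 \geq 1$ and the quotient is well defined; this is harmless since the interesting regime is $n \to \infty$, and for $n=1$ the graph has no edges and the statement is vacuous.

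There is no real obstacle here — the lemma is an elementary combination of the spectral bound $\lambda_1 \leq n$ with the edge-count lower bound $|E| \geq n-1$ for connected graphs. The only point requiring a sentence of justification is why $|E| \geq n-1$, which follows from the existence of a spanning tree; everything else is immediate from results stated earlier in the excerpt.
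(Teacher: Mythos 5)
Your proposal is correct and matches the paper's own argument: the paper likewise combines $\lambda_1 \leq n$ (from the observation $d_i+d_j-|N_i\cap N_j| = |N_i\cup N_j| \leq |V|$ following Lemma~\ref{lemma:largest eigenvalue of a Laplacian}) with $|E| \geq n-1$ for a connected graph, noting that a tree of order $n$ is the minimal connected graph with $n-1$ edges. No discrepancies to report.
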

Observe that $\lambda_1/|E|=O(1)$ as $n\to\infty.$

\begin{lemma}
    Assume one of the following holds:
    \begin{itemize}
        \item The site graph $G$ is a fully occupied path.
        \item The site graph $G$ is fully occupied and complete.
    \end{itemize}
    Then, the egalitarian state $0\in \mathbb{R}^n$ is stable as $n \to \infty$.
\end{lemma}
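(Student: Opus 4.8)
The plan is to reduce everything to the stability criterion already established in Lemma~\ref{lemma:stability of an egalitarian state}: the egalitarian state $0\in\mathbb{R}^n$ is stable as soon as $(1-\mu)\bigl[1+\lambda_1(1+F)\eta/(4|E|)\bigr]<1$. Since $(1+F)\eta/4$ and $\mu$ are fixed constants, it suffices to prove that $\lambda_1/|E|\to 0$ as $n\to\infty$ in each of the two cases; then $(1-\mu)\bigl[1+\lambda_1(1+F)\eta/(4|E|)\bigr]\to 1-\mu<1$, so the left-hand side is $<1$ for all sufficiently large $n$, and stability follows. Note that the generic estimate $\lambda_1/|E|\le n/(n-1)$ from Lemma~\ref{lemma:upper bound} is \emph{not} enough on its own, because its right-hand side tends to $1$ rather than to $0$; the extra structure of the two graph families is what makes the difference.

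For the complete graph $K_n$ one has $|E|=\binom{n}{2}=n(n-1)/2$, and, as already recorded in the remark following Lemma~\ref{lemma:stability of an egalitarian state}, the largest Laplacian eigenvalue is $\lambda_1=n$. Hence $\lambda_1/|E|=2/(n-1)\to 0$. For the path $P_n$ one has $|E|=n-1$, while every vertex has degree at most $2$; consequently, for every edge $(v_i,v_j)\in E$ we get $d_i+d_j-|N_i\cap N_j|\le d_i+d_j\le 4$, and Lemma~\ref{lemma:largest eigenvalue of a Laplacian} then yields the uniform bound $\lambda_1\le 4$. Therefore $\lambda_1/|E|\le 4/(n-1)\to 0$ as well.

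Combining either computation with Lemma~\ref{lemma:stability of an egalitarian state} shows that, for all large $n$, the egalitarian state $0\in\mathbb{R}^n$ is stable, i.e.\ it is stable as $n\to\infty$. I do not expect any real obstacle here: the one point that needs care — and it is a mild one — is to recognize that the $n/(n-1)$ bound must be replaced by a sharper input, namely the \emph{boundedness} of $\lambda_1$ in the path case (via Lemma~\ref{lemma:largest eigenvalue of a Laplacian}) and the \emph{quadratic growth} of $|E|$ against the linear growth of $\lambda_1$ in the complete case; once that is noticed, the rest is a direct substitution into the already-proven criterion.
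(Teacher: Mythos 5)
Your proposal is correct and follows essentially the same route as the paper: bound $\lambda_1\le 4$ on the path via Lemma~\ref{lemma:largest eigenvalue of a Laplacian} and use $\lambda_1=n$ with $|E|=\binom{n}{2}$ on the complete graph, conclude $\lambda_1/|E|\to 0$, and feed this into the criterion of Lemma~\ref{lemma:stability of an egalitarian state}. The only cosmetic difference is that the paper also cites Lemma~\ref{lemma:egalitarian state} to identify the limiting egalitarian state with $0\in\mathbb{R}^n$, a point your write-up takes for granted.
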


\begin{proof}
    By Lemma~\ref{lemma:egalitarian state}, the egalitarian state occurs when all individuals' power approaches zero as $n \to \infty$. According to Lemma~\ref{lemma:largest eigenvalue of a Laplacian}, $\lambda_1 \leq 4$ on a path and $\lambda_1 = n$ on a complete graph. Therefore, $\lambda_1/|E| \to 0$ as $n \to \infty$.

    Consequently via Lemma~\ref{lemma:stability of an egalitarian state},
    \[
    (1-\mu)\left[1 + \lambda_1 \frac{(1+F)\eta}{4|E|}\right] \to 1-\mu < 1 \quad \text{as} \quad n \to \infty.
    \]
    Thus, the egalitarian state $0 \in \mathbb{R}^n$ is stable as $n \to \infty$.
\end{proof}
Observe that $\lambda_1=n$ if the site graph $G$ is a star. We derive the following lemma by Lemma~\ref{lemma:stability of an egalitarian state}.

\begin{lemma}\label{lemma:star graph}
    Assume that the site graph $G$ is a fully occupied star. Then, $0\in \mathbb{R}^n$ is stable if $$(1+F)\eta<\frac{4\mu}{1-\mu},\quad \hbox{and unstable if}\quad (1+F)\eta>\frac{4\mu}{1-\mu}\quad \hbox{as}\quad n\to\infty.$$
\end{lemma}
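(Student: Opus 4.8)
The plan is to read this off directly from Lemma~\ref{lemma:stability of an egalitarian state}, using two elementary facts about a star $G$ on $n$ vertices: it has exactly $n-1$ edges, so $|E| = n-1$, and its Laplacian has largest eigenvalue $\lambda_1 = n$ (as already observed just above the statement; indeed the Laplacian spectrum of $K_{1,n-1}$ is $\{0,\,1^{(n-2)},\,n\}$, whose trace $2n-2 = 2|E|$ checks out).

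First I would invoke Lemma~\ref{lemma:egalitarian state}: on the star, if an egalitarian society is reached then the common power equals $(1-\mu)(1-F)/(\mu n)$, which tends to $0$ as $n \to \infty$, so the egalitarian state under consideration is indeed $0 \in \mathbb{R}^n$ in the large-population regime. Next I would substitute $\lambda_1 = n$ and $|E| = n-1$ into the stability quantity of Lemma~\ref{lemma:stability of an egalitarian state},
\[
(1-\mu)\left[1 + \lambda_1\,\frac{(1+F)\eta}{4|E|}\right] \;=\; (1-\mu)\left[1 + \frac{n}{n-1}\cdot\frac{(1+F)\eta}{4}\right],
\]
and let $n \to \infty$, using $n/(n-1)\to 1$, to obtain the limiting value $(1-\mu)\bigl[1 + (1+F)\eta/4\bigr]$. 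A one-line manipulation then gives
\[
(1-\mu)\left[1 + \frac{(1+F)\eta}{4}\right] < 1 \iff \frac{(1+F)\eta}{4} < \frac{\mu}{1-\mu} \iff (1+F)\eta < \frac{4\mu}{1-\mu},
\]
together with the same chain of equivalences with every inequality reversed.

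Finally I would transfer this back to large finite $n$: since $n/(n-1)$ decreases to $1$, the stability quantity is monotone decreasing in $n$ and converges to $(1-\mu)[1+(1+F)\eta/4]$, so when $(1+F)\eta < 4\mu/(1-\mu)$ it is eventually $<1$ and when $(1+F)\eta > 4\mu/(1-\mu)$ it is $>1$ for all $n$; Lemma~\ref{lemma:stability of an egalitarian state} then yields stability, respectively instability, as $n \to \infty$. There is essentially no obstacle here — all the real content sits in the earlier lemmas — and the only point deserving a sentence of care is precisely this last passage from the $n\to\infty$ limit of the criterion to the behavior for large $n$, which is immediate from the monotonicity of $n/(n-1)$.
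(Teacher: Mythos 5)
Your proposal is correct and follows essentially the same route as the paper, which likewise obtains the lemma by substituting $\lambda_1 = n$ and $|E| = n-1$ into the criterion of Lemma~\ref{lemma:stability of an egalitarian state} and letting $n \to \infty$. Your extra remarks on the monotonicity of $n/(n-1)$ and the identification of the limiting egalitarian state via Lemma~\ref{lemma:egalitarian state} are fine but add nothing beyond the paper's one-line derivation.
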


\begin{proof}[\bf Proof of Theorem~\ref{Thm:critical upper bound for stability}]
    Following from Lemmas~\ref{lemma:stability of an egalitarian state} and~\ref{lemma:upper bound}, we have
$$\liminf_{n \to \infty} \frac{4\mu|E|}{\lambda_1(1-\mu)} \geq \liminf_{n \to \infty} \frac{4\mu(n-1)}{(1-\mu)n} = \frac{4\mu}{1-\mu}.$$
Via Lemmas~\ref{lemma:egalitarian state} and~\ref{lemma:star graph}, this completes the proof.

\end{proof}

\section{Developing a model without endless fights}

\begin{lemma}\label{lemma:submartingale}
    Let $Z_t=\sum_{i,j\in [n]}(h_i(t)-h_j(t))^2$ and $A_t$ be the event of a fight at time $t$. Then, $(Z_t)_{t\geq 0}$ is a submartingale. In particular, $$\E[Z_{t+1}-Z_{t}]\geq 4n \mathbf{P}(A_t).$$
\end{lemma}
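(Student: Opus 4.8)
The plan is to compute $\E[Z_{t+1} - Z_t \mid \mathcal{F}_t]$ directly, conditioning on whether a fight occurs at time $t$ and, if so, who wins. Recall that in the competing model~\eqref{competing model}, when a fight occurs between adjacent agents $p$ and $q$, exactly one of them gains $1$ in power and the other loses $1$; when no fight occurs (which cannot happen on a fully occupied graph, but we keep the argument general), all powers are unchanged and $Z_{t+1} = Z_t$. So it suffices to analyze the increment on the event $A_t$ that a fight occurs, say between $p$ and $q$, and show $\E[Z_{t+1} - Z_t \mid A_t,\ \text{fight is between } p,q] \geq 4n$.

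First I would isolate how $Z_t = \sum_{i,j \in [n]} (h_i - h_j)^2$ changes when only $h_p$ and $h_q$ are perturbed. Writing the post-fight powers as $h_p' = h_p + \epsilon$, $h_q' = h_q - \epsilon$ with $\epsilon \in \{+1, -1\}$ depending on who wins, and $h_i' = h_i$ otherwise, the only terms of the double sum that change are those involving index $p$ or index $q$. Splitting $Z_t$ into the pair term $(h_p - h_q)^2$ (counted twice, for $(p,q)$ and $(q,p)$), the terms $(h_p - h_i)^2$ and $(h_q - h_i)^2$ for $i \neq p,q$ (each counted twice), one gets after expansion
\[
Z_{t+1} - Z_t = 2\big[(h_p' - h_q')^2 - (h_p - h_q)^2\big] + 2\sum_{i \neq p,q}\big[(h_p' - h_i)^2 - (h_p - h_i)^2 + (h_q' - h_i)^2 - (h_q - h_i)^2\big].
\]
The pair term contributes $2[(h_p - h_q + 2\epsilon)^2 - (h_p-h_q)^2] = 2[4\epsilon(h_p - h_q) + 4]$. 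For each $i \neq p,q$, the bracket is $[2\epsilon(h_p - h_i) + 1] + [-2\epsilon(h_q - h_i) + 1] = 2\epsilon(h_p - h_q) + 2$; summing over the $n-2$ such indices and multiplying by $2$ gives $2(n-2)[2\epsilon(h_p - h_q) + 2]$. Adding the two pieces, the terms linear in $\epsilon$ combine to $8\epsilon(h_p - h_q) + 4(n-2)\epsilon(h_p - h_q) = 4n\epsilon(h_p - h_q)$, and the constant terms give $8 + 4(n-2) = 4n$. Hence
\[
Z_{t+1} - Z_t = 4n + 4n\,\epsilon\,(h_p - h_q)\qquad\text{on the event that }p,q\text{ fight}.
\]

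Now I would take conditional expectation over the winner. Given that $p,q$ fight, each is the attacker with probability $1/2$, and the attacker wins with the Fermi probability depending on $\eta_t$ and the power gap. A clean way to finish: let $\delta = h_p - h_q$; by symmetry of the setup the expected value of $\epsilon$ (the signed gain of the fixed agent $p$) equals $Q_{pq} - Q_{qp}$ where these now incorporate the $1/2$ attacker/defender split, but in any case $\E[\epsilon \mid \text{fight}, p,q]$ has the same sign as $\delta$, since the more powerful of the two is more likely to come out ahead — formally $\E[\epsilon \mid \cdots] = \tanh(\eta_t \delta / 2)$ or an analogous monotone odd function, so $\E[\epsilon\,\delta \mid \cdots] \geq 0$. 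Therefore $\E[Z_{t+1} - Z_t \mid A_t] \geq 4n$, and combining with $Z_{t+1} = Z_t$ on $A_t^c$ yields $\E[Z_{t+1} - Z_t \mid \mathcal{F}_t] \geq 4n\,\mathbf{P}(A_t \mid \mathcal{F}_t)$; taking expectations gives the stated bound, and since the right side is nonnegative, $(Z_t)_{t\geq 0}$ is a submartingale.

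The only delicate point is the sign claim $\E[\epsilon\,\delta \mid \text{fight between }p,q] \geq 0$: one must check that averaging over attacker/defender roles preserves the property "the stronger agent's expected net gain is nonnegative." This is where I expect the bookkeeping to matter most — it reduces to verifying $Q_{pq} \geq Q_{qp} \iff h_p \geq h_q$, i.e.\ the monotonicity of the Fermi function, which holds for every $\eta_t > 0$; the $1/2$ role-symmetrization is harmless because it is symmetric in $p$ and $q$. Everything else is the routine expansion above.
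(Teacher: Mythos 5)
Your proposal is correct and follows essentially the same route as the paper: expand the change in the double sum into the pair term and the cross terms with the other $n-2$ agents, obtain $Z_{t+1}-Z_t = 4n[1+\epsilon(h_p-h_q)]$ on a fight, and conclude via $\E[\epsilon\mid p,q\text{ fight}]=Q_{pq}-Q_{qp}=2Q_{pq}-1$, whose sign matches that of $h_p-h_q$ by monotonicity of the Fermi function (your $\tanh(\eta_t\delta/2)$ identity is exactly this). The "delicate point" you flag is handled just as you anticipate, since $Q_{pq}+Q_{qp}=1$ makes the attacker/defender symmetrization give $p$ a win probability of exactly $Q_{pq}$.
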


\begin{proof}
    If there is no fight at time~$t$, then $Z_t=Z_{t+1}$. Else, there is a fight between some agents $p$ and $q$. Let $h_i=h_i(t)$ and $h_i^\star=h_i(t+1)$. For $(h_i-h_j)^2-(h_i^\star-h_j^\star)^2$, we consider the following cases:
    \begin{enumerate}
        \item Only one of $i$ and $j$ belongs to $\{p,q\}$,\vspace{2pt}
        \item $\{i,j\}=\{p,q\}$.
    \end{enumerate}
Observe that 
$$\begin{array}{rcl}
    \displaystyle(h_i-h_p)^2-(h_i-h_p^\star)^2&\n= \n&(h_p^\star-h_p)^2+2(h_i-h_p^\star)(h_p^\star-h_p),\vspace{2pt}  \\
    \displaystyle (h_i-h_q)^2-(h_i-h_q^\star)^2&\n=\n &(h_q^\star-h_q)^2+2(h_i-h_q^\star)(h_q^\star-h_q). 
\end{array}$$
Since $h_p^\star-h_p=\pm 1\iff h_q^\star-h_q=\mp 1$, 
\begin{align*}
    &(h_i-h_p)^2-(h_i-h_p^\star)^2+(h_i-h_q)^2-(h_i-h_q^\star)^2=-2(h_p^\star-h_p)(h_p-h_q)-2,\\
    &(h_p-h_q)^2-(h_p^\star-h_q^\star)^2=-4(h_p^\star-h_p)(h_p-h_q)-4.
\end{align*}
Observe that
   \begin{align*}
       Z_t-Z_{t+1}&=2\bigg\{\sum_{i\in [n]-\{p,q\}}\left[(h_i-h_p)^2-(h_i-h_p^\star)^2+(h_i-h_q)^2-(h_i-h_q^\star)^2\right]\\
       &\hspace{1cm}+(h_p-h_q)^2-(h_p^\star-h_q^\star)^2\bigg\}\\
       &=-4n[(h_p^\star-h_p)(h_p-h_q)+1].
   \end{align*} 
Thus,
\begin{align*}
    \E[Z_t-Z_{t+1}|A_t]&=-4n(\E[(h_p-h_q)(2Q_{pq}-1)|A_t]+1)\leq -4n,\\
    \E[Z_{t+1}-Z_t]&=\E[Z_{t+1}-Z_t|A_t]\mathbf{P}(A_t)\geq 4n\mathbf{P}(A_t).
\end{align*}
\end{proof}
The submartingale~$Z_t$ is the key to substantiating the finite time convergence of the competing model~\eqref{competing model}.

\begin{proof}[\bf Proof of Theorem~\ref{Thm:finite time convergence of a competing model}]
Since $-\ell \leq h_i = -\sum_{j \in [n] \setminus \{i\}} h_j \leq (n-1)\ell$ for all $i \in [n]$, $Z_t$ is a bounded submartingale. It follows from the martingale convergence theorem that $Z_t$ almost surely converges to some random variable $Z_\infty$ with finite expectation. Assume by contradiction that fights are endless. It follows from Lemma~\ref{lemma:submartingale} that there exists an increasing sequence $(t_k)_{k \geq 0}$ such that $A_{t_k}$ occurs with positive probability. By the finiteness of the site graph~$G$, $P(A_{t_k}) \geq 1/\binom{|G|}{2}$ for all $k \geq 0$. Hence, $\mathbb{E}[Z_{t_k+1} - Z_{t_k}] \nrightarrow 0$ as $k \to \infty$, which contradicts $Z_{t+1} - Z_t \to 0$ as $t \to \infty$. 
\end{proof}

\section{Simulations}
In the numerical analysis for Theorem~\ref{Thm:critical upper bound for stability}, we consider 100 agents fully occupying the star graph. Setting $\eta = 1$, we compare three cases of $F$: $F = 1$, $F = 1.5$, and $F = 3$, and observe the variation of the standard deviation $\sigma$ with respect to $\mu$. In Figure~\ref{fig:StarGraph}, the lines in black, red, and blue represent the cases $F = 1$, $F = 1.5$, and $F = 3$, respectively. It can be observed that the critical value of $\mu$ required to achieve $\sigma = 0$ increases as $F$ grows.

\begin{figure}[H]
    \centering
    \includegraphics[width=0.7\textwidth]{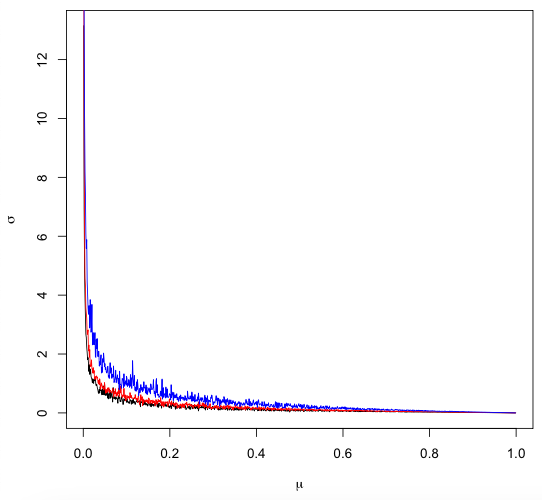}  
    \caption{On a fully occupied star graph}
    \label{fig:StarGraph}
\end{figure}

\section{Statements and Declarations}
\subsection{Competing Interests}
The author is partially funded by NSTC grant.

\subsection{Data availability}
No associated data was used.

\end{document}